\documentclass[12pt, a4paper]{amsart}

\usepackage[english]{babel}
\usepackage{amsthm}
\usepackage{amssymb}
\usepackage{amsxtra}
\usepackage{amscd}
\usepackage[pdftex]{graphicx}
\usepackage[matrix,arrow,curve]{xy}
\usepackage{euscript}

\usepackage[pdftex]{hyperref}

\theoremstyle{plain}
\newtheorem{theoremX}{Theorem}
\newtheorem{theorem}{Theorem}[section]
\newtheorem{lemma}[theorem]{Lemma}

\theoremstyle{definition}
\newtheorem{definition}[theorem]{Definition}

\newtheorem{example}[theorem]{Example}

\newcommand{\pairing}[2]{\left\langle {#1},{#2}\right\rangle}
\renewcommand{\dim}{\mathrm{dim}}
\newcommand{\Aut}{\mathrm{Aut}}

\newcommand{\cone}{\mathrm{cone}_{\mathbb Q}}
\newcommand{\D}{\Delta}
\newcommand{\R}{\EuScript R}

\renewcommand{\P}{{\mathbb P}}

\newcommand{\Cl}{\mathrm{Cl}}

\begin{document}

\title[Automorphisms of a toric variety]{On orbits of the automorphism group on \\ a complete toric variety} 
\author{Ivan Bazhov}
\address{Algebra Department, Faculty of Mechanics and Mathematics,\linebreak
Moscow State University, Leninskie Gory 1,
Moscow, 119991, Russia}
\email{ibazhov@gmail.com}
\date{}
\subjclass[2010]{14M25, 14L30}
\keywords{toric variety, automorphism, invariant divisor}

\begin{abstract}
Let $X$ be a complete toric variety and $\Aut(X)$ be the automorphism group. We give an explit description of $\Aut(X)$-orbits on~$X$. In particular, we show that $\Aut(X)$ acts on $X$ transitively if and only if $X$ is a product of projective spaces.
\end{abstract}

\maketitle{}

\section{Introduction}

Let us recall that a toric variety is a normal algebraic variety $X$ with a generically transitive and effective action of an algebraic torus~$\mathbb T$. The complement $X\smallsetminus\mathcal O_0$ of the open $\mathbb T$-orbit $\mathcal O_0$ is a union of prime $\mathbb T$-invariant divisors $D_1,\ldots,D_r$. Every  $\mathbb T$-orbit on $X$ is uniquely defined by the divisors~$D_i$ which contain the orbit. In particular, the number of $\mathbb T$-orbits is finite. 

It is known that the group of algebraic automorphisms $\Aut(X)$ of a projective or, more generally, complete toric variety $X$ is an affine algebraic group. The group $\Aut(X)$ contains $\mathbb T$ as a maximal torus. Hence, the connected component of identity $\Aut^0(X)$ is generated by the torus $\mathbb T$ and root subgroups, i.e., one dimensional unipotent subgroups normalized by $\mathbb T$. For smooth $X$ a combinatorial description of root subgroups was found by M.\,Demazure \cite{Dem1}. A generalization of this description to complete simplicial toric varieties in terms of total coordinates was given in~\cite{Cox2}. The case of an arbitrary complete toric variety was studied in \cite{Buh}, see also \cite{Nill}. The paper \cite{BG} worked out another method to describe the automorphism group of a projective toric variety.

It is clear that any $\Aut^0(X)$- as well as any $\Aut(X)$-orbit on $X$ is a union of $\mathbb T$-orbits. Our goal is to describe  $\mathbb T$-orbits which lie in the same $\Aut^0(X)$- or $\Aut(X)$-orbit. 

Let us associate with a $\mathbb T$-orbit $\mathcal O$ the set of all prime $\mathbb T$-invariant divisors $D(\mathcal O):=\{D_{i_1},\ldots,D_{i_l}\}$ which do not contain $\mathcal O$. Let $\Gamma(\mathcal O)$ be the monoid in the divisor class group $\Cl(X)$ generated by classes of the divisors in~$D(\mathcal O)$.

\begin{theoremX}
\label{Uno}
Let $X$ be a complete toric variety. Then two $\mathbb T$-orbits $\mathcal O$ and $\mathcal O'$ on $X$ lie in the same $\Aut^0(X)$-orbit if and only if 
$$
\Gamma(\mathcal O)=\Gamma(\mathcal O').
$$
\end{theoremX}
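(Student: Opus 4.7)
The approach I would take hinges on the classification of root subgroups of $\Aut^0(X)$ recalled in the introduction: every root subgroup $U_e \subseteq \Aut^0(X)$ corresponds to a Demazure root, i.e.\ an element $e \in M$ attached to a ray $\rho_{i_0} \in \D(1)$ satisfying $\pairing{e}{v_{i_0}} = -1$ and $\pairing{e}{v_j} \geq 0$ for $j \neq i_0$. In Cox coordinates $U_e$ acts by
\[
  x_{i_0}\ \mapsto\ x_{i_0} + t \prod_{j \neq i_0} x_j^{\pairing{e}{v_j}}, \qquad x_j\ \mapsto\ x_j \quad (j \neq i_0).
\]
Since $\Aut^0(X)$ is generated by $\mathbb{T}$ and its root subgroups, and $\mathbb{T}$ preserves every $\mathbb{T}$-orbit, it suffices to analyse the effect of a single $U_e$ on a given $\mathbb{T}$-orbit $\mathcal{O}_\sigma$.

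For the ``only if'' direction, observe that $U_e$ connects $\mathcal{O}_\sigma$ to a different $\mathbb{T}$-orbit $\mathcal{O}_\tau$ exactly when the monomial $\prod_{j \neq i_0} x_j^{\pairing{e}{v_j}}$ does not vanish on $\mathcal{O}_\sigma$; this forces $\pairing{e}{v_j} = 0$ for every ray $\rho_j \subseteq \sigma$ with $j \neq i_0$, and gives $D(\mathcal{O}_\tau) = D(\mathcal{O}_\sigma)\cup\{D_{i_0}\}$ (or the reverse, depending on which of the two orbits is taken as the starting point). The degree-zero identity $\sum_j \pairing{e}{v_j}[D_j] = 0$ in $\Cl(X)$, combined with $\pairing{e}{v_{i_0}} = -1$ and the vanishing above, yields
\[
  [D_{i_0}]\ =\ \sum_{D_j \in D(\mathcal{O}_\sigma)} \pairing{e}{v_j}\,[D_j]\ \in\ \Gamma(\mathcal{O}_\sigma),
\]
so $\Gamma(\mathcal{O}_\tau) = \Gamma(\mathcal{O}_\sigma)$. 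Decomposing an arbitrary element of $\Aut^0(X)$ as a product of torus and root-subgroup elements, one concludes that $\Gamma$ is constant on $\Aut^0$-orbits.

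For the converse the heart of the argument is the following Key Lemma: if $D_{i_0} \notin D(\mathcal{O}_\sigma)$ and $[D_{i_0}] \in \Gamma(\mathcal{O}_\sigma)$, then some Demazure root attached to $\rho_{i_0}$ carries $\mathcal{O}_\sigma$ to the orbit with $D$-set $D(\mathcal{O}_\sigma)\cup\{D_{i_0}\}$. The desired root is essentially forced by the hypothesis: writing
\[
  D_{i_0} - \sum_{D_j \in D(\mathcal{O}_\sigma)} b_j D_j\ =\ \mathrm{div}(\chi^{-e}), \qquad b_j \in \mathbb{Z}_{\geq 0},
\]
for a suitable $e \in M$, and comparing coefficients ray by ray via the dichotomy ``$\rho_j$ is a ray of $\sigma$ iff $D_j \notin D(\mathcal{O}_\sigma)$'', one obtains $\pairing{e}{v_{i_0}} = -1$, $\pairing{e}{v_j} = b_j \geq 0$ for $D_j \in D(\mathcal{O}_\sigma)$, and $\pairing{e}{v_j} = 0$ for the remaining rays of $\sigma$. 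These are precisely the Demazure conditions plus the nonvanishing condition from the previous paragraph. Iterating the Key Lemma starting from any $\mathcal{O}$ produces a unique maximal orbit $\mathcal{O}_{\max} \in \Aut^0(X)\cdot\mathcal{O}$ with $D(\mathcal{O}_{\max}) = \{D_j : [D_j] \in \Gamma(\mathcal{O})\}$; since a $\mathbb{T}$-orbit is determined by its $D$-set, two orbits with equal monoids $\Gamma$ share the same $\mathcal{O}_{\max}$ and hence lie in a single $\Aut^0$-orbit.

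The main difficulty I anticipate is in passing from the linear-equivalence statement $[D_{i_0}] \in \Gamma(\mathcal{O}_\sigma)$ to the genuine existence of a root subgroup of $\Aut^0(X)$ realising the corresponding Demazure root. In the simplicial case this follows cleanly from the Cox description, but in the full generality of a complete toric variety one must invoke the classification of root subgroups established in \cite{Buh}. A secondary subtlety, namely that the ray set $\sigma(1)\setminus\{\rho_{i_0}\}$ spans a genuine cone of $\D$, is automatic once the root subgroup is constructed, as it must send $\mathbb{T}$-orbits to $\mathbb{T}$-orbits on $X$.
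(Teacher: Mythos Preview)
Your argument is correct and follows essentially the same route as the paper. Your Key Lemma is exactly Lemma~\ref{prop_2}, and your ``only if'' direction is the other implication of that same lemma combined with the decomposition of $\Aut^0(X)$ from Theorem~\ref{all_aut}. The one organisational difference is in the ``if'' direction: the paper links $\mathcal O_\sigma$ and $\mathcal O_{\sigma'}$ by the explicit sequence~(\ref{eq_seq}) passing through the cone on $\sigma(1)\cap\sigma'(1)$, whereas you descend from each orbit to the canonical minimal cone $\sigma_{\max}$ with ray set $\{\tau_j:[D_j]\notin\Gamma(\mathcal O)\}$. Your variant is arguably cleaner and is in fact precisely what the paper uses later in the proof of Theorem~\ref{New_Th}.

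Your anticipated difficulty is not a real obstacle. The existence of the one-parameter subgroup $H_m\subset\Aut^0(X)$ for an arbitrary Demazure root $m$ on a complete (not necessarily simplicial) toric variety is already supplied by the references cited before Theorem~\ref{all_aut}, and the combinatorial description of which $\mathbb T$-orbits $H_m$ links (Lemmas~\ref{H_mconn} and~\ref{H_m_lemma}) is stated and used without any simpliciality hypothesis. So you may replace the Cox-coordinate computation by a direct appeal to Lemma~\ref{H_m_lemma}, and then nothing in your argument distinguishes the simplicial from the general case. The secondary subtlety you mention, that $\sigma(1)\smallsetminus\{\tau_{i_0}\}$ spans a genuine cone of $\Delta$, is handled exactly as you say: the root $e$ you construct has $\langle e,v_{i_0}\rangle=-1$ and vanishes on the remaining rays of $\sigma$, so $\sigma\cap e^\perp$ is a facet of $\sigma$ with that ray set and hence lies in $\Delta$.
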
	

The proof uses techniques of paper \cite{AKZ}. It is shown there that for an affine toric variety any non-trivial orbit of a root subgroup meets exactly two $\mathbb T$-orbits and a combinatorial description of such pairs of orbits is given. We adapt these results to a complete toric variety.

Paper \cite{BH} gives a systematic treatment of toric varieties in term of bunches of cones. These cones are images of $\Gamma(\mathcal O)$ in $\Cl(X)\otimes\mathbb Q$ and they are Gale dual to cones in $\D$.

Theorem \ref{New_Th} brings a description of the closures of $\Aut^0(X)$-orbits on $X$. Theorem \ref{Uno_ii} gives a description of $\Aut(X)$-orbits. As a corollary we show that the action of $\Aut(X)$ on $X$ is transitive if and only if $X$ is a product of projective spaces. A related result obtained in \cite[Proposition 4.1]{AG} states that every complete toric variety which admits a transitive action of a semisimple algebraic group is a product of projective spaces.

The base field $\Bbbk$ is assumed to be algebraically closed and of characteristic zero.

The author is grateful to his supervisor I.\,V.\,Arzhantsev for stating the problem, the idea to apply Gale duality and useful references, and to the referee for his/her careful reading of the manuscript and many helpful comments.

\section{Preliminaries}
\label{intro}

{\bf Toric varieties and fans.}
Let $N\cong\mathbb Z^d$ be the lattice of one-parameter subgroups of a torus $\mathbb T$,  $M$ be the dual lattice of characters, and $\pairing{\cdot}{\cdot}:M\times N\to \mathbb Z$ be the natural pairing. Also we put $N_{\mathbb Q}=N\otimes_{\mathbb Z}\mathbb Q$, $M_{\mathbb Q}=M\otimes_{\mathbb Z}\mathbb Q$. 

With any toric variety $X$ one associates a fan $\D=\D_X$ of rational polyhedral cones in $N_{\mathbb Q}$, e.g., see \cite{Fulton, Cox1, Oda1}. Let $\D(1)=\{\tau_1,\ldots,\tau_r\}$ be the set of one-dimensional cones in $\D$ and $\sigma(1)$ be the set of one-dimensional faces of a cone $\sigma$. The primitive lattice generator of a ray $\tau$ is denoted as $v_\tau$ and $v_i=v_{\tau_i}$.

There exists a one-to-one correspondence $\sigma\leftrightarrow \mathcal O_\sigma$ between cones $\sigma$ in $\D$ and $\mathbb T$-orbits $\mathcal O_\sigma$ on $X$ such that $\dim\,\mathcal O_\sigma=\dim\,\D-\dim\,\sigma$. A toric variety $X$ is complete if and only if the fan $\D$ is complete.

Every prime $\mathbb T$-invariant divisor $D_i$ is a closure of an orbit $\mathcal O_{\tau_i}$ for some $\tau_i\in\D(1)$. A $\mathbb T$-orbit $\mathcal O_\sigma$ is contained in the intersection of divisors $D_i$ corresponding to rays $\tau_i\in\sigma(1)$. 

There is a natural map from the free abelian group $\mathrm{Div_{\mathbb T}}(X)$ generated by all prime $\mathbb T$-invariant divisors to the divisor class group $\Cl(X)$: $D\mapsto [D]$. If $X$ is complete then the following sequence is exact \cite[Theorem 4.1.3]{Cox1}:
\begin{equation}
\label{eq_exact}
0\to M\to \mathrm{Div_{\mathbb T}}(X)\to \Cl(X)\to 0,
\end{equation}
where $m\in M$ goes to $\sum\limits_{i=1}^{r}\pairing{m}{v_i}D_i$.

\begin{example}
\label{example_1}
Let us give some examples of complete toric surfaces.
\begin{enumerate}
	\item The projective plane $\P^2$, the fan $\D_{\P^2}$ is shown in Figure~\ref{fig_1}(i),\\
	$\D_{\P^2}(1)$ is generated by $(1,0)$, $(0,-1)$, $(-1,1)$, $\Cl(\P^2)\cong\mathbb Z$.
	\item The Hirtzebruch surface $\EuScript H_s$, its fan is shown in Figure~\ref{fig_1}(ii),\\
	$\D_{\EuScript H_s}(1)$, $s\geqslant1$, is generated by $(1,0),\ (0,-1)$, $(-1,s)$, $(0,1)$,\\ $\Cl(\mathcal H_s)\cong\mathbb Z^2$.
	\item The weighted projective plane $\P(1,1,s)$, $s\geqslant2$, is defined by the fan in Figure~\ref{fig_1}(iii),
	$\D_{\P(1,1,s)}(1)$ is generated by $(1,0)$, $(0,-1)$, $(-1,s)$, $\Cl(\P(1,1,s))\cong\mathbb Z$.
	\item The variety $\EuScript B_s$, $s\geqslant1$, is defined by the fan in Figure~\ref{fig_1}(iv),\\
	$\D_{\EuScript B_s}(1)$ is generated by  $(s,1)$, $(s,-1)$, $(-s,-1)$, $(-s,1)$,\\ $\Cl(\EuScript B_s)\cong\mathbb Z^2\oplus \mathbb Z_{2s}$.
\end{enumerate}
\begin{figure}[htbp]
	\centering
		\includegraphics[width=120mm]{./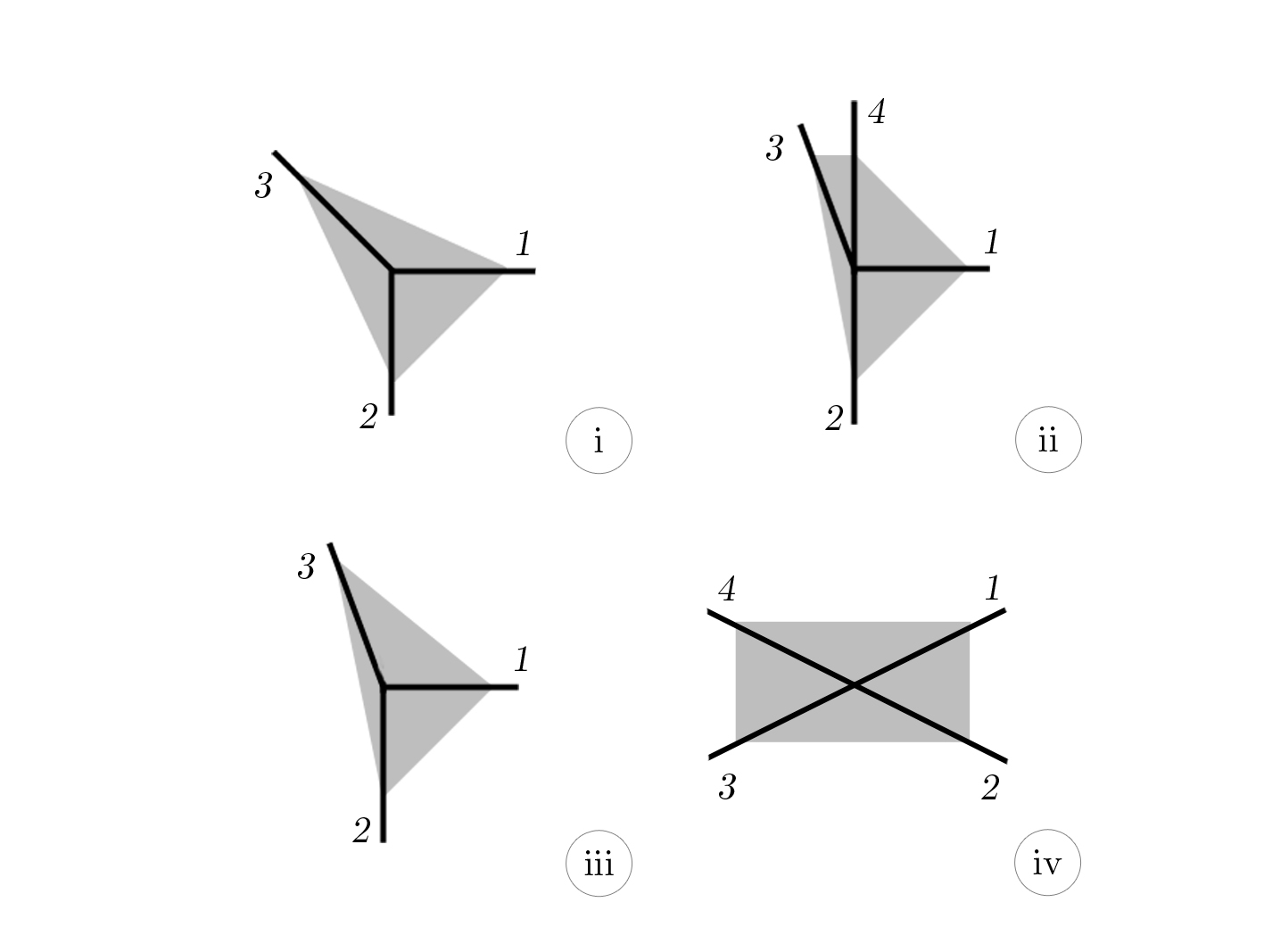}
	\caption{Fans of $\P^2$, $\EuScript H_s$, $\P(1,1,s)$, and $\EuScript B_s$.}
	\label{fig_1}
\end{figure}
\end{example}

{\bf Root subgroups and the automorphism group.} Let $X$ be a complete toric variety and $\D$ be the corresponding fan.
\begin{definition}
\label{root_def}
An element $m\in M$ is called {\it a Demazure root} of $\D$ if the following conditions hold:  
$$
\exists \tau\in\D(1): \pairing{m}{v_\tau}=-1\mbox{ and } \forall \tau'\in\D(1)\smallsetminus\{\tau\}: \pairing{m}{v_{\tau'}}\geqslant0.
$$
\end{definition}
Let us denote the set of all roots of the fan $\D$ as $\R$. Denote by $\eta_m$, $m\in\R$, the unique ray generator $v_\tau$ such that $\pairing{m}{v_\tau}=-1$. A root $m\in\R\cap(-\R)$ is called {\it semisimple}.

\begin{example}
\label{example_2}
The sets of roots for the varieties of Example~\ref{example_1} are easy to picture.
\begin{enumerate}
	\item Roots for $\P^2$ are pictured in Figure~\ref{fig_2}(i). 
	\item Roots for $\EuScript H_s$ are pictured in Figure~\ref{fig_2}(ii).
	\item The set of roots for $\P(1,1,s)$ and for $\EuScript H_s$ coincide.
	\item The set of roots for $\EuScript B_s$ is empty.
\end{enumerate}
\begin{figure}[htbp]
	\centering
		\includegraphics[width=120mm]{./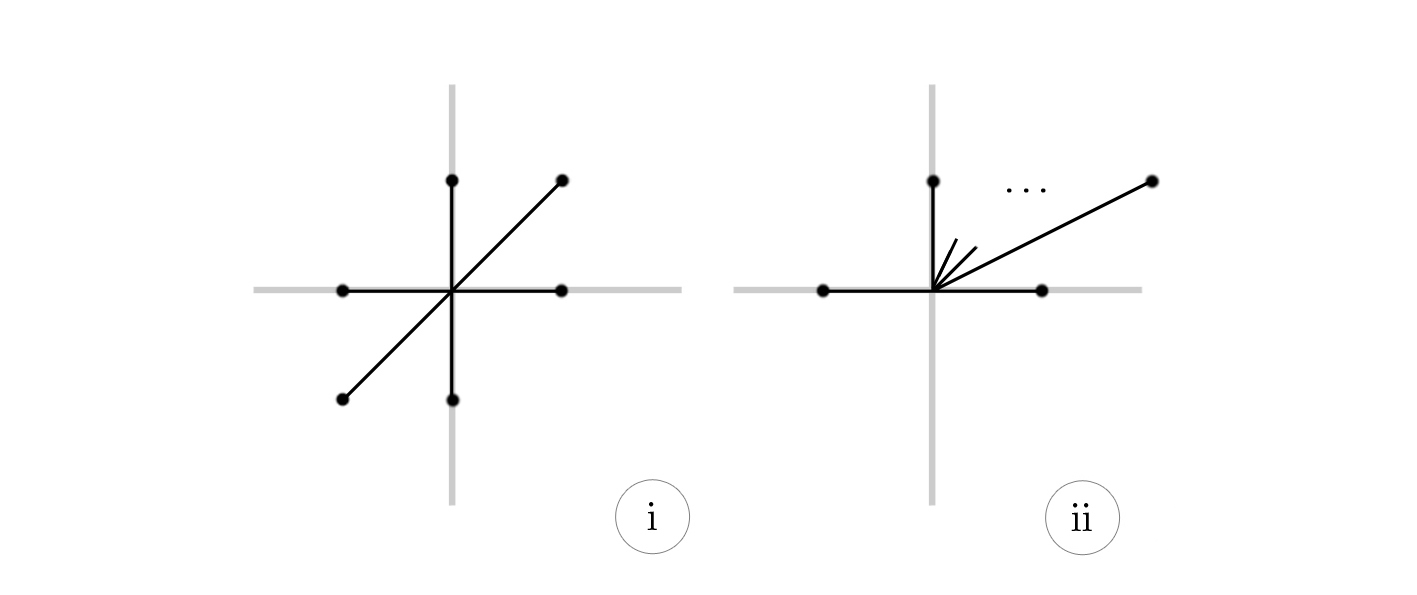}
	\caption{Demazure roots for $\P^2$, $\EuScript H_s$, and $\P(1,1,s)$.}
	\label{fig_2}
\end{figure}
\end{example}

A root $m\in\R$ defines a root subgroup $H_m\subset\Aut(X)$, see \cite{AKZ, Cox2, Oda1}. The root subgroups $H_m$ are unipotent one-parameter subgroups normalized by $\mathbb T$.

\begin{lemma}
\label{H_mconn}
For every point $Q\in X\smallsetminus X^{H_m}$ the orbit $H_m\cdot Q$ meets exactly two $\mathbb T$-orbits $\mathcal O_1$ and $\mathcal O_2$ on $X$ and $\dim\,\mathcal O_{\sigma_1}=1+\dim\,\mathcal O_{\sigma_2}$.
\end{lemma}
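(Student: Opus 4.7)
My plan is to reduce the lemma to the affine case of \cite{AKZ} via the Cox (total coordinate) construction. Let $\tau\in\D(1)$ be the ray with $v_\tau=\eta_m$, write the Cox coordinates as $x_1,\ldots,x_r$ indexed by $\D(1)$, and denote by $\pi\colon\BigV\to X$ the canonical good quotient by $G:=\mathrm{Hom}(\Cl(X),\Bbbk^*)$. The root $m$ determines the locally nilpotent derivation $\partial_m$ of the Cox ring $\Bbbk[x_1,\ldots,x_r]$ given by
\[
\partial_m(x_j)=0\text{ for }\tau_j\neq\tau,\qquad \partial_m(x_\tau)=\prod_{\tau_j\neq\tau}x_j^{\pairing{m}{v_j}},
\]
whose $G$-invariance is the image of $m$ under \eqref{eq_exact}. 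Its integration yields a one-parameter subgroup $\hat H_m\subset\Aut(\BigV)$ commuting with $G$ and inducing $H_m$ on $X$; only the coordinate $x_\tau$ is moved, being translated by $t$ times the monomial above.

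Fix a lift $\hat Q\in\pi^{-1}(Q)$. Since $Q\notin X^{H_m}$ the scalar $c:=\prod_{\tau_j\neq\tau}\hat Q_j^{\pairing{m}{v_j}}$ is nonzero, so $\hat H_m\cdot\hat Q\subset\BigV$ is an affine line along which the vanishing set $T_0:=\{\tau_j\neq\tau:\hat Q_j=0\}$ is constant while $x_\tau$ takes every value of $\Bbbk$ exactly once. The orbit therefore carries exactly two vanishing patterns $T_0$ and $T_0\cup\{\tau\}$, descending via $\pi$ to two $\mathbb T$-orbits $\mathcal O_{\sigma_1}$ (generic) and $\mathcal O_{\sigma_2}$ (special). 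These are necessarily distinct: otherwise $H_m\cdot Q\cong\mathbb A^1$ would embed nonconstantly into a single $\mathbb T$-orbit $\cong(\Bbbk^*)^d$, impossible since every morphism $\mathbb A^1\to(\Bbbk^*)^d$ is constant. The inclusion $\mathcal O_{\sigma_2}\subset\overline{\mathcal O_{\sigma_1}}$ gives $\sigma_1\preceq\sigma_2$, and the $x_\tau=0$ point of the lifted orbit witnesses $\mathcal O_{\sigma_2}\subset D_\tau$, i.e.\ $\tau\in\sigma_2(1)$.

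To obtain the codimension-one statement $\sigma_2(1)=\sigma_1(1)\sqcup\{\tau\}$, I pick any maximal cone $\sigma\in\D$ with $\sigma_2\preceq\sigma$ and pass to the $\mathbb T$-invariant affine open $U_\sigma\subset X$ associated with $\sigma$. Since $\tau\in\sigma(1)$, the chart $U_\sigma$ is $H_m$-stable: every $D_{\tau_j}$ with $\tau_j\notin\sigma(1)$ satisfies $\tau_j\neq\tau$ and is therefore $\hat H_m$-fixed. The orbit $H_m\cdot Q$ lies entirely in $U_\sigma$, and the affine case of \cite{AKZ} applied to the affine toric variety $U_\sigma$ (where $m$ remains a Demazure root) yields $\sigma_2(1)=\sigma_1(1)\sqcup\{\tau\}$, whence $\dim\sigma_2=\dim\sigma_1+1$ and $\dim\mathcal O_{\sigma_1}=\dim\mathcal O_{\sigma_2}+1$ by the orbit-cone dimension formula. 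The main obstacle is establishing the codimension-one face relation in the non-simplicial setting, where $\pi$ is only a good quotient and the $\BigV$-level analysis does not directly witness it; the reduction to the $H_m$-stable chart $U_\sigma$ bypasses this by placing us in the already-established affine setting of \cite{AKZ}.
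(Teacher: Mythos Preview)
Your proof is correct, and its endgame coincides with the paper's: reduce to an $H_m$-stable affine chart and invoke \cite[Proposition~2.1]{AKZ}. The route to that chart differs. The paper's proof is two lines: by \cite[Proposition~3.14]{Oda1}, every affine chart $U_\sigma$ for a cone $\sigma$ containing $\eta_m$ is $H_m$-stable and the complement $X\smallsetminus\bigcup U_\sigma$ is fixed pointwise by $H_m$; hence any $Q\notin X^{H_m}$ already lies in such a chart, and \cite{AKZ} finishes. You instead build the Cox realization, compute $\hat H_m$ explicitly, analyze vanishing patterns along the lifted orbit to extract two $\mathbb T$-orbits and the relation $\tau\in\sigma_2(1)$, and only then pick a maximal cone $\sigma\supseteq\sigma_2$ (hence $\sigma\ni\eta_m$) whose chart is $H_m$-stable by the Cox description. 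Most of your Cox-level work (two orbits, distinctness, $\sigma_1\preceq\sigma_2$) is in fact re-derived by \cite{AKZ} once you are in $U_\sigma$; its essential role in your argument is just to locate the chart, which the paper gets for free from Oda's description of the fixed locus. The upside of your approach is that it is self-contained about why the chart is $H_m$-stable (no appeal to \cite{Oda1}); the paper's version is considerably shorter.
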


\begin{proof}
Let $U_i$, $i\in I$, be affine charts corresponding to cones in $\D$ which contain $\eta_m$. Due to the proof of \cite[Proposition 3.14]{Oda1}, the charts $U_i$ are preserved by $H_m$ and the complement $V=X\smallsetminus \bigcup\limits_{i\in I}U_i$ is fixed by $H_m$ pointwise. Applying \cite[Proposition 2.1]{AKZ} to affine charts $U_i$ completes the proof.
\end{proof}

\begin{definition}
\label{H_m_def}
A pair of $\mathbb T$-orbits $(\mathcal O_{\sigma_1}, \mathcal O_{\sigma_2})$ from Lemma \ref{H_mconn} is called {\it $H_m$-connected}. 
\end{definition}

\begin{lemma}(\cite[Lemma 2.2]{AKZ})
\label{H_m_lemma}
A pair $(\mathcal O_{\sigma_1}, \mathcal O_{\sigma_2})$ is $H_m$-connected if and only if 
\begin{equation}
\label{pair}
m|_{\sigma_2}\leqslant 0\mbox{ and } \sigma_1=\sigma_2\cap m^{\bot}\mbox{ is a facet of cone }\sigma_2.
\end{equation}
\qed
\end{lemma}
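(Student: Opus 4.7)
My plan is to reduce the biconditional to its affine analogue, which is precisely [AKZ, Lemma 2.2], using the same covering by $H_m$-stable charts that appears in Lemma \ref{H_mconn}. Concretely, the charts $U_\sigma$ with $\eta_m\in\sigma$ are each preserved by $H_m$, and their union exhausts $X\smallsetminus X^{H_m}$, so any non-trivial $H_m$-orbit is contained in a single such $U_\sigma$. The affine lemma [AKZ, Lemma 2.2] describes $H_m$-connected pairs inside an affine toric variety by exactly the condition appearing on the right-hand side, and that condition refers only to $\sigma_1,\sigma_2,m$, not to the ambient $\sigma$, so the reduction is clean.

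For the forward implication, I would take $Q\notin X^{H_m}$ whose $H_m$-orbit meets both $\mathcal{O}_{\sigma_1}$ and $\mathcal{O}_{\sigma_2}$. By the proof of Lemma \ref{H_mconn}, $Q\in U_\sigma$ for some $\sigma\ni\eta_m$, and since $U_\sigma$ is $H_m$-stable the whole orbit $H_m\cdot Q$ stays inside it; hence both $\sigma_1$ and $\sigma_2$ are faces of $\sigma$, and applying [AKZ, Lemma 2.2] to $U_\sigma$ yields the combinatorial conclusion. For the converse, I would first observe that the hypothesis $m|_{\sigma_2}\leqslant 0$ together with $\sigma_1=\sigma_2\cap m^\perp$ being a facet forces $m$ to be strictly negative on at least one ray generator of $\sigma_2$; but by Definition \ref{root_def} the only ray generator in the whole fan on which $m$ is negative is $\eta_m=v_\tau$, so $\eta_m\in\sigma_2(1)$. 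Choosing any maximal cone $\sigma\supseteq\sigma_2$ then gives an $H_m$-stable chart $U_\sigma$ containing both orbits, and [AKZ, Lemma 2.2] applied inside $U_\sigma$ produces the desired point $Q$.

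The only potential obstacle in the reduction is making sure the $H_m$-orbit does not leak between charts, but this is automatic since each $U_\sigma$ is open and $H_m$-invariant. The genuine content of the lemma is therefore all in [AKZ, Lemma 2.2], whose affine proof proceeds by writing the locally nilpotent derivation $\partial_m(\chi^{m'})=\langle m',v_\tau\rangle\chi^{m'+m}$ on $\Bbbk[\sigma^\vee\cap M]$, computing the flow $\exp(t\partial_m)$ on monomial coordinates of a point in $\mathcal{O}_{\sigma_1}$, and reading off that the limit lies in the unique adjacent orbit $\mathcal{O}_{\sigma_2}$ matching the displayed combinatorial conditions.
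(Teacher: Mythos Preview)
Your reduction is correct and matches the paper's treatment: the paper gives no proof at all here, simply citing \cite[Lemma~2.2]{AKZ} and ending with \qed, the implicit justification being exactly the chart-reduction already carried out in the proof of Lemma~\ref{H_mconn}. Your write-up just makes explicit the one point the paper leaves to the reader, namely that $\eta_m\in\sigma_2$ so that $\sigma_2$ (and hence $\sigma_1$) lies in some $H_m$-stable affine chart.
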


Every automorphism $\psi\in\Aut(N,\D)$ of the lattice $N$ preserving the fan $\D$ defines an automophism of $X$ denoted by $P_\psi$.

\begin{theorem}(\cite[Corollary 4.7]{Cox2}, \cite{Dem1}, \cite[page 140]{Oda1})
\label{all_aut}
Let $X$ be a complete toric variety. Then $\Aut(X)$ is a linear algebraic group generated by $\mathbb T$,
$H_m$ for $m\in\R$, and $P_\psi$ for $\psi\in\Aut (N,\D)$. Moreover, the group $\Aut^0(X)$ is generated by $\mathbb T$ and
$H_m$ for $m\in\R$.\qed
\end{theorem}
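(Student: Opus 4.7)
The plan is to treat the three claims in turn: that $\Aut(X)$ is a linear (i.e.\ affine) algebraic group, that $\Aut^0(X)$ is generated by $\mathbb T$ together with the root subgroups $H_m$, and that the discrete part is accounted for by the lattice automorphisms $P_\psi$. The key structural input is that $\mathbb T \subset \Aut^0(X)$ is a maximal torus: any torus $\mathbb T' \supset \mathbb T$ in $\Aut^0(X)$ has an open orbit in $X$ of dimension $\dim \mathbb T' \geqslant \dim \mathbb T = \dim X$, and this open orbit must coincide with the open $\mathbb T$-orbit, which forces $\mathbb T' = \mathbb T$.

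For the linear algebraic group claim, since $X$ is complete $\Aut(X)$ exists as a locally algebraic group scheme with tangent algebra $H^0(X, T_X)$. I would deduce affineness from the Cox construction: writing $X$ as a good quotient of the open set $\BigV$ by the action of the diagonalizable group $\mathrm{Hom}(\Cl(X), \Bbbk^*)$, one identifies $\Aut(X)$ with the quotient by this group of its normalizer inside the group of graded automorphisms of the Cox ring, which is visibly affine. From this $\Aut^0(X)$ emerges as a connected affine algebraic group containing $\mathbb T$ as a maximal torus.

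For the generation of $\Aut^0(X)$, consider the adjoint action of $\mathbb T$ on $\mathfrak g := \mathrm{Lie}\,\Aut^0(X)$, giving a weight decomposition
$$\mathfrak g = \mathfrak t \oplus \bigoplus_{m} \mathfrak g_m.$$
Each weight vector integrates, via its $\mathbb T$-equivariance and the nilpotency supplied by the weight being non-zero, to a one-parameter unipotent subgroup normalized by $\mathbb T$, i.e.\ a root subgroup $H_m$. Since $\mathbb T$ and the $H_m$ together account for $\mathfrak g$, the subgroup they generate has full Lie algebra and so equals the connected group $\Aut^0(X)$. It remains to identify the occurring weights with $\R$. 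This is a local computation: a $\mathbb T$-semi-invariant vector field of weight $m$ restricts on each affine chart $U_\sigma$ to a $\mathbb T$-semi-invariant derivation of $\Bbbk[\sigma^\vee \cap M]$, and by the classification of such derivations (see \cite{Dem1, Oda1}) extends regularly to $X$ precisely when $m$ satisfies the two conditions in Definition~\ref{root_def}. Patching this local root data across all cones of $\D$ is the main technical obstacle and is where the completeness of $\D$ enters.

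For the final assertion, take any $\phi \in \Aut(X)$. Then $\phi\,\mathbb T\,\phi^{-1}$ is a maximal torus of $\Aut^0(X)$, so by conjugacy of maximal tori in a connected linear algebraic group there exists $g \in \Aut^0(X)$ with $g\phi$ normalizing $\mathbb T$. Such an element permutes the $\mathbb T$-orbits of $X$ and hence, via the orbit--cone correspondence, induces an automorphism $\psi$ of the fan $\D$. Since the Weyl group of $\mathbb T$ in $\Aut^0(X)$ is trivial (as $\mathbb T$ has an open orbit on $X$, so any element of its normalizer that fixes $\mathbb T$ pointwise acts trivially on that open orbit, hence on all of $X$), one may adjust by an element of $\mathbb T$ to arrange $g\phi = P_\psi$. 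This yields $\phi \in \Aut^0(X)\cdot P_\psi$ and shows that $\Aut(X)/\Aut^0(X)$ embeds into the finite group $\Aut(N,\D)$, completing the generation statement.
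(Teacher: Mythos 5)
The paper offers no proof of this statement: it is quoted from \cite{Cox2}, \cite{Dem1}, \cite{Oda1} and closed with a QED box, so there is no in-paper argument to compare against. Your outline reconstructs the standard Demazure--Cox--B\"uhler proof, and its architecture (affineness via the Cox quotient presentation, weight decomposition of $H^0(X,T_X)$ under the maximal torus $\mathbb T$ to generate $\Aut^0(X)$, conjugacy of maximal tori to reduce the component group to fan automorphisms) is the right one.

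Two concrete points need repair. First, the claim that the Weyl group of $\mathbb T$ in $\Aut^0(X)$ is trivial is false: for $X=\P^n$ one has $\Aut^0(X)=\mathrm{PGL}_{n+1}$, whose Weyl group is $S_{n+1}$. What your parenthetical argument actually proves --- and what the final step genuinely needs --- is that the \emph{centralizer} of $\mathbb T$ in $\Aut(X)$ equals $\mathbb T$: an automorphism commuting with $\mathbb T$ acts on the open orbit as translation by some $t_0\in\mathbb T$ and hence equals $t_0$ everywhere. With that, $g\phi$ normalizes $\mathbb T$, hence induces some $\psi\in\Aut(N,\D)$, and $(g\phi)P_\psi^{-1}$ centralizes $\mathbb T$ and so lies in $\mathbb T$; this yields the generation statement without any appeal to the Weyl group. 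Second, the concluding assertion that $\Aut(X)/\Aut^0(X)$ embeds into $\Aut(N,\D)$ is likewise false for $\P^n$ (all $P_\psi$ already lie in $\mathrm{PGL}_{n+1}$); the correct statement is that $\Aut(N,\D)$ surjects onto the component group, which is all the theorem requires. Finally, the two steps you defer --- affineness of $\Aut(X)$ for a complete but possibly non-simplicial $X$, and the identification of every nonzero $\mathbb T$-weight of $H^0(X,T_X)$ with a Demazure root in $\R$ --- are precisely the substantive content of the cited references (Cox treats the simplicial case, B\"uhler the general complete case), so your text is an outline rather than a self-contained proof; that is, however, consistent with how the paper itself treats the result.
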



\section{Main results}
\label{main}

Let $X$ be a complete toric variety defined by the fan $\D$ with $\D(1)=\{\tau_1,\ldots,\tau_r\}$, $v_i=v_{\tau_i}$, $D_i=D_{\tau_i}$. Let us consider the collection $\widetilde{\D}$ of all cones generated by some rays in $\D(1)$. For a cone $\sigma\in\widetilde{\D}$ let us define a monoid
$$
\Gamma(\sigma)=\sum\limits_{\tau_i\in\D(1)\smallsetminus\sigma(1)}\mathbb Z_{\geqslant0}[D_i].
$$
For a $\mathbb T$-orbit $\mathcal O_\sigma$ the monoids $\Gamma(\mathcal O_\sigma)$(see Introduction) and $\Gamma(\sigma)$ coincide. Let us put
$$
\Upsilon(\D)=\{\Gamma(\sigma): \sigma\in\D\}.
$$

\begin{example}
\label{example_3}
Let us find $\Upsilon(\D)$ for every fan of Example~\ref{example_1}. 
\begin{enumerate}
	\item For $\P^2$ there is one monoid $\mathbb Z_{\geqslant0}$, see Figure~\ref{fig_3}(i).
	\item For $\EuScript H_s$ there are two monoids of rank 2, see Figure~\ref{fig_3}(ii).
	\item For $\P(1,1,s)$ there are monoids $\mathbb Z_{\geqslant0}$ and $s\mathbb Z_{\geqslant0}$, see Figure~\ref{fig_3}(iii).
	\item For $\EuScript B_s$ there are four monoids generated by 2 elements, four monoids generated by 3 elements and a monoid generated by all 4 elements $[D_1]$, $[D_2]$, $[D_3]$, $[D_4]$. We picture monoids generated by 2 elements in 3-dimensional space remembering that $2s[D_3-D_1]=0$, see Figure~\ref{fig_3}(iv).
\end{enumerate}
\begin{figure}[htbp]
	\centering
		\includegraphics[width=120mm]{./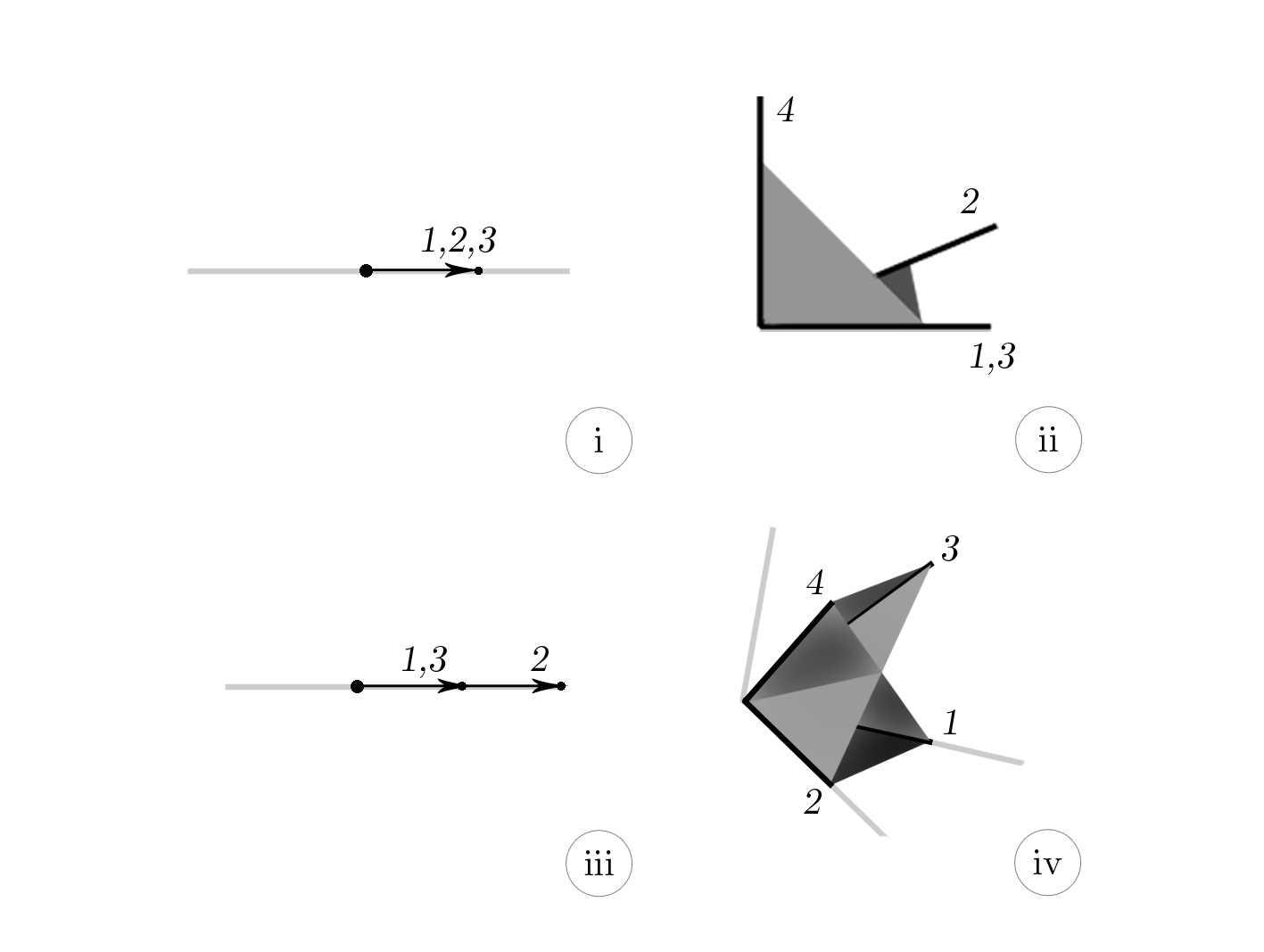}
	\caption{Collections $\Upsilon(\D)$ for $\P^2$, $\EuScript H_s$, $\P(1,1,s)$, and $\EuScript B_s$.}
	\label{fig_3}
\end{figure}
\end{example}

\begin{lemma}
\label{prop_1}
Let $\sigma_1,\ \sigma_2\in\widetilde{\D}$. 
If there exists a root $m\in\R$ such that (\ref{pair}) holds, then
$$
\sigma_1\in\D\ \Leftrightarrow\ \sigma_2\in\D.
$$
\end{lemma}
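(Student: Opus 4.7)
The hypothesis (\ref{pair}) pins down the shape of the pair very tightly. Since $m|_{\sigma_2}\leqslant 0$ and $\sigma_1=\sigma_2\cap m^{\bot}$ is a codimension-one face, each ray $v\in\sigma_2(1)$ satisfies $\pairing{m}{v}\leqslant 0$, with equality exactly on $\sigma_1(1)$; the remaining rays lie in $\D(1)$ and have negative pairing with $m$, so by the Demazure root condition they must all equal $\eta_m$. Since $\sigma_1$ is a proper facet of $\sigma_2$, at least one such ray exists, giving
$$
\sigma_2(1)=\sigma_1(1)\sqcup\{\eta_m\},\qquad \sigma_2=\sigma_1+\mathbb Q_{\geqslant 0}\eta_m.
$$

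The implication ``$\sigma_2\in\D\Rightarrow\sigma_1\in\D$'' is then immediate: $\sigma_1=\sigma_2\cap m^{\bot}$ is a face of $\sigma_2$ cut out by the supporting functional $m$, and faces of cones of $\D$ belong to $\D$.

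For the converse I would pass to the star fan of $\sigma_1$. Assume $\sigma_1\in\D$, let $\pi\colon N_{\mathbb Q}\to N_{\mathbb Q}/\mathrm{span}(\sigma_1)$, and set $\mathrm{Star}(\sigma_1)=\{\pi(\sigma):\sigma\in\D,\ \sigma_1\text{ is a face of }\sigma\}$, a complete fan in the quotient (by a standard consequence of completeness of $\D$). Its rays are in bijection with cones $\sigma\in\D$ of dimension $\dim\sigma_1+1$ having $\sigma_1$ as a facet. Because $m$ vanishes on $\mathrm{span}(\sigma_1)$, it descends to $\bar m$ on the quotient, and $\bar m(\pi(\eta_m))=-1$; in particular $\pi(\eta_m)\neq 0$ and lies in some cone $\bar\sigma_*$ of $\mathrm{Star}(\sigma_1)$. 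Expanding $\pi(\eta_m)$ as a nonnegative combination of ray generators of $\bar\sigma_*$ and applying $\bar m$ gives a total equal to $-1$, while the Demazure inequality $\pairing{m}{v}\geqslant 0$ for $v\in\D(1)\smallsetminus\{\eta_m\}$ forces some ray $\bar\tau$ of $\bar\sigma_*$ to carry $\bar m<0$. Writing $\bar\tau=\pi(\tau)$ with $\tau\in\D$ a cone having $\sigma_1$ as a facet, any ray $v\in\tau(1)\smallsetminus\sigma_1(1)$ then satisfies $\bar m(\pi(v))<0$, hence $\pairing{m}{v}<0$, hence $v=\eta_m$. Thus $\tau(1)=\sigma_1(1)\cup\{\eta_m\}=\sigma_2(1)$, so $\tau=\sigma_2\in\D$.

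\textbf{Main obstacle.} The conceptual heart is the reduction to $\mathrm{Star}(\sigma_1)$: one needs completeness of $\D$ to descend to completeness of the star fan, and the correspondence between rays of $\mathrm{Star}(\sigma_1)$ and cones of $\D$ of dimension $\dim\sigma_1+1$ with $\sigma_1$ as a facet. Once that is in place, the Demazure root condition isolates $\eta_m$ as the only ray that can produce a negative value of $\bar m$, and the identification $\tau=\sigma_2$ is bookkeeping.
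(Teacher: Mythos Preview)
Your proof is correct, but it follows a different route from the paper's. The paper argues geometrically: assuming $\sigma_1\in\D$, it picks a point in the torus orbit $\mathcal O_{\sigma_1}$, applies the root subgroup $H_m$, and invokes Lemmas~\ref{H_mconn} and~\ref{H_m_lemma} (the description of $H_m$-connected pairs) to conclude that the $H_m$-orbit lands in a second torus orbit whose cone is forced, by the uniqueness in condition~(\ref{pair}), to be $\sigma_2$; hence $\sigma_2\in\D$. The reverse direction is the same face observation you make.

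Your argument is purely combinatorial: you never touch the variety or the $H_m$-action, but instead pass to the star fan $\mathrm{Star}(\sigma_1)$, use completeness of $\D$ to ensure it covers $\pi(\eta_m)$, and then let the Demazure inequalities single out $\eta_m$ among the rays of the relevant cone $\tau\in\D$, giving $\tau=\sigma_1+\mathbb Q_{\geqslant 0}\eta_m=\sigma_2$. This has the advantage of being self-contained at the level of fans and of making the role of completeness explicit; the paper's version, on the other hand, is shorter because it reuses the $H_m$-connectedness machinery already set up, and it keeps the argument aligned with the orbit-gluing picture that drives the rest of the paper. Both are valid; they simply reflect the two sides of the toric dictionary.
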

\begin{proof}
Let us assume that $\sigma_1\in\D$. There exists a $\mathbb T$-orbit $\mathcal O_{\sigma_1}$ on $X$. By Lemmas \ref{H_mconn} and \ref{H_m_lemma}, there exists an orbit $\mathcal O_2$ which meets $H_m\cdot \mathcal O_{\sigma_1}$. By conditions (\ref{pair}), the cone $\sigma_1$, and the root $m$ define $\sigma_2$ uniquely and the cone corresponding to $\mathcal O_2$ is $\sigma_2$. Hence, $\sigma_2\in\D$.

Conversely, by (\ref{pair}), $\sigma_1$ is a face of $\sigma_2$ and $\sigma_1\in\D$.
\end{proof}

\begin{lemma}
\label{prop_2}
Let $\sigma_1,\ \sigma_2\in\widetilde{\D}$ with $\sigma_2(1)=\sigma_1(1)\cup\{\tau_k\}$ for some $1\leq k\leq r$. 
Then conditions (\ref{pair}) hold for some root $m\in\R$ if and only if
$$
\Gamma(\sigma_1)=\Gamma(\sigma_2).
$$
\end{lemma}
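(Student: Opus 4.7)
The plan is to reduce the statement to a single identity in $\Cl(X)$ and then translate it back and forth through the exact sequence (\ref{eq_exact}). Since $\sigma_1(1)\subset\sigma_2(1)$, every generator of $\Gamma(\sigma_2)$ is also a generator of $\Gamma(\sigma_1)$, so the inclusion $\Gamma(\sigma_2)\subseteq\Gamma(\sigma_1)$ is automatic, and the only ``extra'' generator of $\Gamma(\sigma_1)$ is $[D_k]$. Hence $\Gamma(\sigma_1)=\Gamma(\sigma_2)$ is equivalent to the single condition $[D_k]\in\Gamma(\sigma_2)$, i.e.\ to the existence of nonnegative integers $a_i$, $\tau_i\notin\sigma_2(1)$, such that $[D_k]=\sum a_i[D_i]$ in $\Cl(X)$.

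Using exactness of (\ref{eq_exact}), I would then rewrite this last condition as the existence of $m\in M$ satisfying $\pairing{m}{v_k}=-1$, $\pairing{m}{v_i}=0$ for $\tau_i\in\sigma_1(1)$, and $\pairing{m}{v_i}=a_i\geqslant 0$ for $\tau_i\notin\sigma_2(1)$. Observe that such an $m$ is automatically a Demazure root with $\eta_m=v_k$, since the only negative value among the $\pairing{m}{v_\tau}$ equals $-1$ and is attained on the single ray $\tau_k$. Moreover the vanishing of $m$ on $\sigma_1(1)$ together with $\pairing{m}{v_k}=-1$ gives $m|_{\sigma_2}\leqslant 0$ and $\sigma_2\cap m^{\bot}=\mathrm{cone}(\sigma_1(1))=\sigma_1$, which is a facet of $\sigma_2$ because $\sigma_2(1)\smallsetminus\sigma_1(1)=\{\tau_k\}$; so (\ref{pair}) holds.

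For the converse, suppose $m\in\R$ satisfies (\ref{pair}). The condition $\sigma_1=\sigma_2\cap m^{\bot}$ forces $\pairing{m}{v_i}=0$ for all $\tau_i\in\sigma_1(1)$ and, combined with $\sigma_1\neq\sigma_2$, forces $\pairing{m}{v_k}<0$. Since $m$ is a Demazure root, its unique ray of negative value is $\eta_m$ with value $-1$; hence $v_k=\eta_m$ and $\pairing{m}{v_k}=-1$, while $\pairing{m}{v_i}\geqslant 0$ for all remaining $\tau_i\notin\sigma_2(1)$. Reversing the previous translation, we read off $[D_k]=\sum_{\tau_i\notin\sigma_2(1)}\pairing{m}{v_i}[D_i]\in\Gamma(\sigma_2)$, and so $\Gamma(\sigma_1)=\Gamma(\sigma_2)$.

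The argument is really just a dictionary between the two descriptions, so there is no serious obstacle; the only point requiring care is verifying, in the converse direction, that the Demazure root condition forces $\pairing{m}{v_k}=-1$ exactly (not merely $\leqslant -1$), which is why one needs to first rule out $\pairing{m}{v_k}=0$ via $\sigma_1\neq\sigma_2$ and then invoke Definition~\ref{root_def}.
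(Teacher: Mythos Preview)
Your proposal is correct and follows essentially the same approach as the paper's own proof: reduce $\Gamma(\sigma_1)=\Gamma(\sigma_2)$ to the single condition $[D_k]\in\Gamma(\sigma_2)$, then use the exact sequence~(\ref{eq_exact}) as a dictionary between relations among the $[D_i]$ in $\Cl(X)$ and characters $m\in M$ with prescribed pairings $\pairing{m}{v_i}$. The paper presents this as a single chain of equivalences rather than treating the two directions separately, and is somewhat terser in verifying that the resulting $m$ really satisfies~(\ref{pair}); your version spells out these checks (in particular the reason $\pairing{m}{v_k}=-1$ rather than merely $\leqslant 0$) more explicitly, but the argument is the same.
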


\begin{proof}
We have
$$
\Gamma(\sigma_1)=\sum\limits_{\tau_i\notin\sigma_1(1)}{\mathbb Z_{\geqslant0}}[D_i]={\mathbb Z_{\geqslant0}}[D_k]+\sum\limits_{\tau_i\notin\sigma_2(1)}{\mathbb Z_{\geqslant0}}[D_{i}],
$$
$$
\Gamma(\sigma_2)=\sum\limits_{\tau_i\notin\sigma_2(1)}{\mathbb Z_{\geqslant0}}[D_i],
$$
and the condition $\Gamma(\sigma_1)=\Gamma(\sigma_2)$ can be written as
\begin{equation}
\label{eq_d_k}
[D_k]=\sum\limits_{\tau_i\notin\sigma_2(1)}a_i[D_i]
\end{equation}
for some $a_i\in {\mathbb Z_{\geqslant0}}$, $\tau_i\notin\sigma_2(1)$. Let us put $a_k=-1$ and $a_i=0$ for $\tau_i\in\sigma_1(1)$. Due to exact sequence (\ref{eq_exact}), condition (\ref{eq_d_k}) is equivalent to existence of $m\in M$ such that $\pairing{m}{v_i}=a_i$ for all $1\leqslant i\leqslant r$. It means that $m$ is a root and (\ref{pair}) is satisfied for $\sigma_1$, $\sigma_2$.
\end{proof}

The following lemma shows how we can reconstruct the fan $\D$\linebreak from~$\Upsilon(\D)$ and $\D(1)$. 

\begin{lemma}
\label{lemma_oto}
Let $\sigma$ be in $\widetilde{\D}$. Then $\Gamma(\sigma)\in\Upsilon(\D)$ if and only if $\sigma\in\D$.
\end{lemma}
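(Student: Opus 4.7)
The forward direction is immediate from the definition of $\Upsilon(\D)$. For the converse, assume $\Gamma(\sigma) \in \Upsilon(\D)$, so $\Gamma(\sigma) = \Gamma(\sigma')$ for some $\sigma' \in \D$. My plan is to build a chain of cones in $\widetilde{\D}$,
$$
\sigma' = \rho_0, \rho_1, \ldots, \rho_N = \sigma,
$$
in which consecutive cones differ by exactly one ray and satisfy $\Gamma(\rho_{k-1}) = \Gamma(\rho_k)$. Given such a chain, Lemma \ref{prop_2} produces a root $m \in \R$ for which condition (\ref{pair}) holds at each step, and Lemma \ref{prop_1} then propagates $\D$-membership. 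Starting from $\sigma' \in \D$, one concludes $\sigma \in \D$.

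To construct the chain, let $\sigma_*$ be the cone in $\widetilde{\D}$ generated by $\sigma(1) \cap \sigma'(1)$. The chain is built in two stages: first pass from $\sigma'$ to $\sigma_*$ by removing the rays in $\sigma'(1) \setminus \sigma(1)$ one at a time, then pass from $\sigma_*$ to $\sigma$ by adding the rays in $\sigma(1) \setminus \sigma'(1)$ one at a time. The order in which the individual rays are processed will be immaterial.

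The heart of the argument is verifying that $\Gamma$ is preserved at each individual step, and this is where the hypothesis $\Gamma(\sigma') = \Gamma(\sigma)$ is essential. In a removal step a ray $\tau_k \in \sigma'(1) \setminus \sigma(1)$ is deleted from the current cone $\rho_{k-1}$, so $\Gamma(\rho_k) = \Gamma(\rho_{k-1}) + \mathbb Z_{\geqslant 0}[D_k]$; since $\tau_k \notin \sigma(1)$, the class $[D_k]$ already lies in $\Gamma(\sigma) = \Gamma(\sigma') = \Gamma(\rho_{k-1})$, so $\Gamma$ does not grow. In an addition step a ray $\tau_l \in \sigma(1) \setminus \sigma'(1)$ is inserted, so the generator $[D_l]$ is dropped and $\Gamma(\rho_k) \subseteq \Gamma(\rho_{k-1})$ is automatic; since $\tau_l \notin \sigma'(1)$, the class $[D_l]$ lies in $\Gamma(\sigma') = \Gamma(\sigma)$, and because every intermediate cone in the growing stage has ray set contained in $\sigma(1)$, every generator of $\Gamma(\sigma)$ is still a generator of $\Gamma(\rho_k)$. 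Hence $[D_l] \in \Gamma(\rho_k)$ and the monoid does not shrink either.

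The main obstacle I anticipate is precisely this bookkeeping in the growing stage: one must see that dropping a generator $[D_l]$ does not shrink the monoid, and this relies on expressing $[D_l]$ as a $\mathbb Z_{\geqslant 0}$-combination of classes $[D_i]$ with $\tau_i \notin \sigma(1)$ together with the fact that all such $\tau_i$ remain outside the ray sets of the intermediate cones. Once this is set up, Lemmas \ref{prop_1} and \ref{prop_2} combine to finish the proof automatically.
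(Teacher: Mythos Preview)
Your argument is correct and follows essentially the same route as the paper: build a chain of cones in $\widetilde{\D}$ through the common face $\sigma_*$ with $\sigma_*(1)=\sigma(1)\cap\sigma'(1)$, check that $\Gamma$ is constant along the chain, and then invoke Lemmas~\ref{prop_2} and~\ref{prop_1} to transport $\D$-membership from $\sigma'$ to $\sigma$. The only cosmetic difference is that the paper verifies constancy of $\Gamma$ via the sandwich $\Gamma(\sigma)\subseteq\Gamma(\varsigma_i)\subseteq\Gamma(\sigma_*)=\Gamma(\sigma)+\Gamma(\sigma')=\Gamma(\sigma)$ rather than your step-by-step check, but the content is identical.
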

\begin{proof}
The implication $\sigma\in\D\Rightarrow\Gamma(\sigma)\in\Upsilon(\D)$ is trivial.

Let us assume that $\Gamma(\sigma)\in\Upsilon(\D)$. According to the definition of $\Upsilon(\D)$ there exists a cone $\sigma'\in\D$ such that $\Gamma(\sigma')=\Gamma(\sigma)$. Without loss of generality we may assume $\sigma(1)=\{\tau_1,\ldots,\tau_p\}$,  $\sigma'(1)=\{\tau_s,\tau_{s+1}\ldots,\tau_{p+q}\}$. If $\sigma(1)\cap\sigma'(1)\neq\emptyset$, then $\sigma(1)\cap\sigma'(1)=\{\tau_s,\tau_{s+1},$ $\ldots,\tau_p\}$, otherwise we may assume $s=p+1$. Consider a sequence of cones in $\widetilde{\D}$
\begin{equation}
\label{eq_seq}
\sigma=\varsigma_1, \varsigma_2,\ldots,\varsigma_{s+q}=\sigma',
\end{equation}
$$
\varsigma_i=\cone(\tau_i,\ldots,\tau_p),\ 1\leqslant i\leqslant s-1,
$$
$$
\varsigma_s=\cone(\tau_s,\ldots,\tau_p)\mbox{ if }s\leqslant p,\ \varsigma_s=\cone(0)\mbox{ otherwise},
$$
$$
\varsigma_{s+i}=\cone(\tau_s,\ldots,\tau_{p+i}),\ 1\leqslant i\leqslant q.
$$
We have
$$
\Gamma(\varsigma_s)=\Gamma(\sigma)+\Gamma(\sigma')=\Gamma(\sigma)=\Gamma(\sigma').
$$
Moreover, 
$$
\Gamma(\sigma)\subset\Gamma(\varsigma_i)\subset\Gamma(\varsigma_s),\mbox{ hence, } \Gamma(\varsigma_i)=\Gamma(\sigma)\mbox{ for } 1\leqslant i\leqslant s,  
$$
$$
\Gamma(\sigma')\subset\Gamma(\varsigma_{s+i})\subset\Gamma(\varsigma_s),\mbox{ hence, } \Gamma(\varsigma_{s+i})=\Gamma(\sigma')\mbox{ for } 1\leqslant i\leqslant q.  
$$
By Lemma~\ref{prop_2}, for $\varsigma_i, \varsigma_{i+1}$ and some roots $m_i\in M$ conditions (\ref{pair}) are satisfied for all $1\leqslant i\leqslant s+q-1$. By Lemma~\ref{prop_1}, $\varsigma_i\in\D$ for $s+q-1\geqslant i\geqslant 1$. Hence, $\sigma\in\D$.
\end{proof}

\begin{proof}[Proof of Theorem~\ref{Uno}]
Since $\Aut^0(X)$ is generated by $\mathbb T$ and $H_m$, $m\in\R$, orbits $\mathcal O_{\sigma}$ and $\mathcal O_{\sigma'}$ lie in the same $\Aut^0(X)$-orbit if and only if there exists a sequence 
\begin{equation}
\label{eq_seq_2}
\sigma=\varsigma_1, \varsigma_2,\ldots,\varsigma_l=\sigma',
\end{equation}
of cones from $\D$ such that ($\mathcal O_{\varsigma_i}$, $\mathcal O_{\varsigma_{i+1}}$) is $H_{m_i}$-connected for some $m_i\in\R$, $1\leqslant i\leqslant l-1$. 

If $\Gamma(\sigma)=\Gamma(\sigma')$ we can take sequence (\ref{eq_seq}) as the required one. 

Conversely, let us assume there is sequence (\ref{eq_seq_2}). By Lemma~\ref{prop_2}, we have $\Gamma(\sigma)=\Gamma(\varsigma_1)=\ldots=\Gamma(\varsigma_l)=\Gamma(\sigma')$ and $\Gamma(\sigma)=\Gamma(\sigma')$.
\end{proof}

\begin{example}
\label{example_4}
For every toric variety $X$ an $\Aut^0(X)$-orbit on $X$ consists of $\mathbb T$-orbits and can be pictured as a set of cones. We describe these orbits for the varieties from previous examples. 
\begin{enumerate}
	\item The action of $\Aut^0(\P^2)$ on $\P^2$ is transitive, see Figure~\ref{fig_4}(i). 
	\item There are two orbits on~$\EuScript H_s$, see Figure~\ref{fig_4}(ii).
	\item There are two orbits on $\P(1,1,s)$: the regular locus and the singular point, see Figure~\ref{fig_4}(iii).
	\item Any orbit of $\Aut^0(\EuScript B_s)$ on $\EuScript B_s$ is a $\mathbb T$-orbit.
\end{enumerate}
\begin{figure}[htnp]
	\centering
		\includegraphics[width=120mm]{./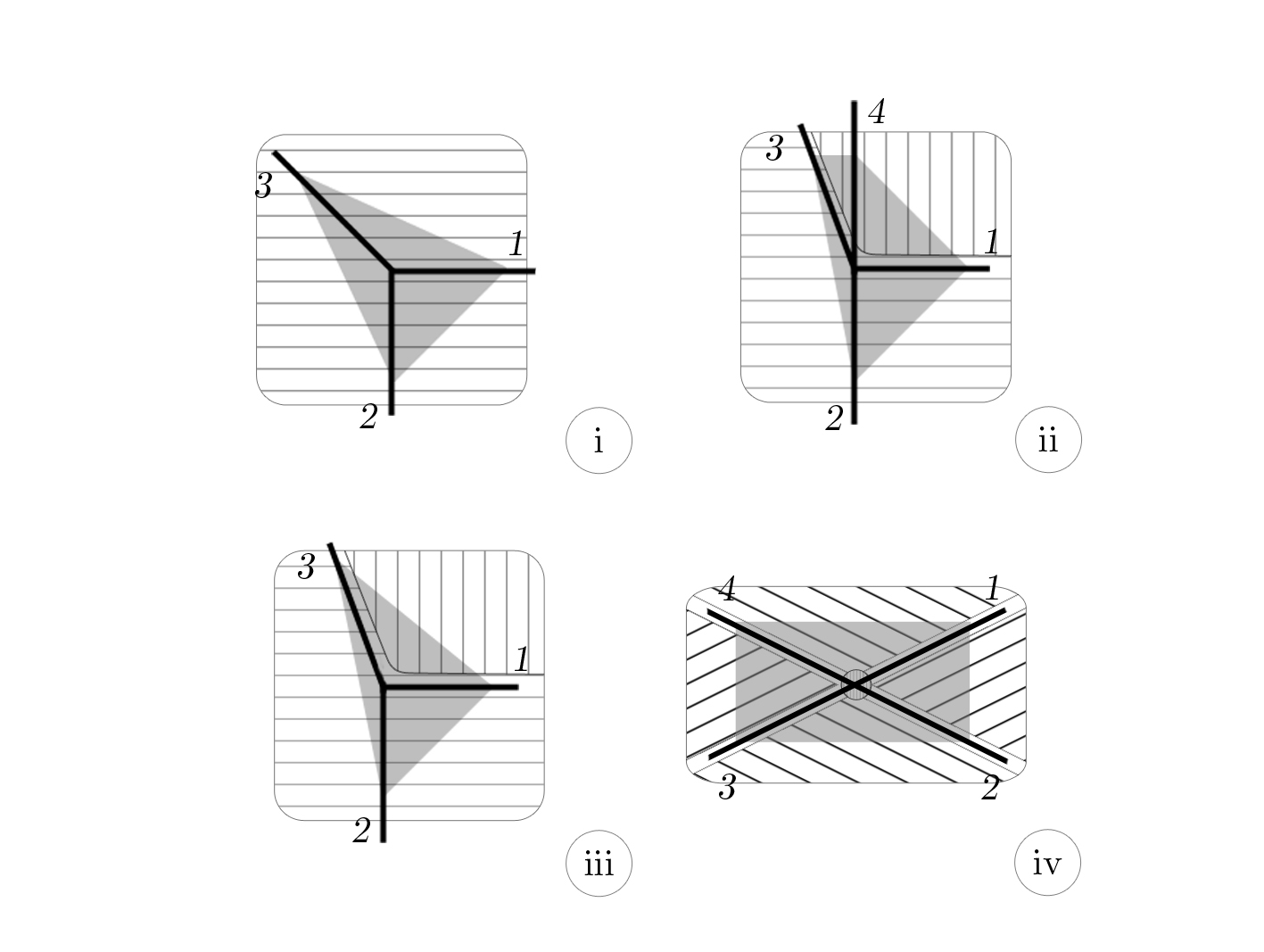}
	\caption{Orbits on $\P^2$, $\EuScript H_s$, $\P(1,1,s)$, and $\EuScript B_s$.}
	\label{fig_4}
\end{figure}
\end{example}

\begin{theorem}
\label{New_Th}
Let $\mathcal O$ and $\mathcal O'$ be $\mathbb T$-orbits. Then
$$
\overline{\Aut^0(X)\cdot\mathcal O}\subset\overline{\Aut^0(X)\cdot\mathcal O'}
\Leftrightarrow \Gamma(\mathcal O)\subset\Gamma(\mathcal O').
$$
\end{theorem}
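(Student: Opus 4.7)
The plan is to recast the closure inclusion as a combinatorial condition on the fan, and then verify both directions using Theorem~\ref{Uno} together with the fan axiom that $\sigma\cap\sigma'\in\Delta$ for any two cones in $\Delta$. Writing $\mathcal O = \mathcal O_\sigma$ and $\mathcal O' = \mathcal O_{\sigma'}$, I would first observe that Theorem~\ref{Uno} describes $\Aut^0(X)\cdot\mathcal O_{\sigma'}$ as the finite union of $\mathbb T$-orbits $\mathcal O_{\sigma''}$ with $\Gamma(\sigma'')=\Gamma(\sigma')$; combining this with the standard formula $\overline{\mathcal O_{\sigma''}}=\bigcup_{\sigma''\preceq\rho}\mathcal O_\rho$ gives the working criterion
\[
\mathcal O_\sigma\subseteq\overline{\Aut^0(X)\cdot\mathcal O_{\sigma'}}\ \iff\ \exists\,\sigma''\in\Delta:\ \Gamma(\sigma'')=\Gamma(\sigma')\ \text{and}\ \sigma''\preceq\sigma.
\]
Since the right-hand closure is already $\Aut^0(X)$-invariant, the closure inclusion in the theorem is equivalent to $\mathcal O_\sigma\subseteq\overline{\Aut^0(X)\cdot\mathcal O_{\sigma'}}$, so the whole theorem reduces to this criterion.

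The forward implication will follow at once from the reformulation: picking a $\sigma''$ supplied by the criterion, the containment $\sigma''(1)\subseteq\sigma(1)$ immediately yields $\Gamma(\sigma)\subseteq\Gamma(\sigma'')=\Gamma(\sigma')$.

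For the backward implication I propose to take $\bar\sigma:=\sigma\cap\sigma'$, which by the fan axiom lies in $\Delta$ and is a face of both $\sigma$ and $\sigma'$, with $\bar\sigma(1)=\sigma(1)\cap\sigma'(1)$. Writing the rays outside $\bar\sigma(1)$ as the union of those outside $\sigma(1)$ and those outside $\sigma'(1)$ gives
\[
\Gamma(\bar\sigma)=\Gamma(\sigma)+\Gamma(\sigma')=\Gamma(\sigma'),
\]
where the last equality uses the hypothesis $\Gamma(\sigma)\subseteq\Gamma(\sigma')$. Taking $\sigma''=\bar\sigma$ in the criterion then delivers the required inclusion. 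The one point demanding care is the criterion itself---recognizing that the $\Aut^0(X)$-orbit closure is exactly the union of toric orbit closures $\overline{\mathcal O_{\sigma''}}$ indexed by $\Gamma(\sigma'')=\Gamma(\sigma')$---but this is essentially a formal consequence of Theorem~\ref{Uno}, so the substantive combinatorial content of the whole proof is the single identity $\Gamma(\sigma\cap\sigma')=\Gamma(\sigma)+\Gamma(\sigma')$, and the argument is markedly shorter than that of Theorem~\ref{Uno} itself.
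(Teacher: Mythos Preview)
Your argument is correct and genuinely different from the paper's. The paper proceeds by constructing, for each $\mathbb T$-orbit $\mathcal O$, a canonical cone $\sigma_{\max}=\cone(v_i:[D_i]\notin\Gamma(\mathcal O))$, invokes Lemma~\ref{lemma_oto} to show $\sigma_{\max}\in\Delta$, and then establishes $\overline{\mathcal O_{\sigma_{\max}}}=\overline{\Aut^0(X)\cdot\mathcal O}$; the theorem then follows by comparing these canonical representatives. Your route bypasses $\sigma_{\max}$ and Lemma~\ref{lemma_oto} entirely: after the reformulation via Theorem~\ref{Uno}, the fan axiom alone hands you the witness $\sigma''=\sigma\cap\sigma'$, and the identity $\Gamma(\sigma\cap\sigma')=\Gamma(\sigma)+\Gamma(\sigma')$ (which follows from $\bar\sigma(1)=\sigma(1)\cap\sigma'(1)$, itself a consequence of the fan axiom applied to the rays) finishes the job. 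Your proof is shorter and more self-contained; what the paper's approach buys in exchange is the structural fact that every $\Aut^0(X)$-orbit closure is the closure of a \emph{single} $\mathbb T$-orbit $\mathcal O_{\sigma_{\max}}$, which is not visible from your argument but is also not needed for the theorem as stated.
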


\begin{proof}
Let us put $\sigma_{\max}=\cone(v_i: [D_i]\notin\Gamma(\mathcal O))$. If $\mathcal O$ corresponds to a cone $\sigma\in\D$, then $\sigma_{\max}\subseteq\sigma$. Hence, $\sigma_{\max}(1)=\sigma_{\max}\cap\D(1)$ and $\Gamma(\sigma_{\max})=\Gamma(\mathcal O)$. By Lemma \ref{lemma_oto}, there exists a $\mathbb T$-orbit $\mathcal O_{\max}$ corresponding to $\sigma_{\max}$. It is easy to see that $\sigma_{\max}\prec\sigma$ and $\mathcal O\subseteq\overline{\mathcal O_{\max}}$. 

Since $\mathcal O$ is an arbitrary $\mathbb T$-orbit with $\Gamma(\mathcal O)=\Gamma(\mathcal O_{\max})$, we have
$$
\mathcal O_{\max}\subset \Aut^0(X)\cdot \mathcal O \mbox{\hspace{0.5cm} and \hspace{0.5cm}} \overline{\mathcal O_{\max}}=\overline{\Aut^0(X)\cdot\mathcal O}.
$$

Without loss of generality we may assume $\overline{\mathcal O}=\overline{\Aut^0(X)\cdot\mathcal O}$ and $\overline{\mathcal O'}=\overline{\Aut^0(X)\cdot\mathcal O'}$. It means that  
$$\sigma(1)=\{v_i: [D_i]\notin\Gamma(\mathcal O)\} \mbox{\hspace{0.5cm} and \hspace{0.5cm}} \sigma'(1)=\{v_i: [D_i]\notin\Gamma(\mathcal O')\}.$$ 
We have
\begin{equation*}
\begin{split}
\Gamma(\mathcal O)\subseteq\Gamma(\mathcal O')&\Leftrightarrow \D(1)\smallsetminus\sigma(1)\subseteq\D(1)\smallsetminus\sigma'(1)\\
&\Leftrightarrow\sigma'\prec\sigma\\
&\Leftrightarrow \overline{\mathcal O}\subseteq\overline{\mathcal O'}\\
&\Leftrightarrow\overline{\Aut^0(X)\cdot\mathcal O}\subseteq\overline{\Aut^0(X)\cdot\mathcal O'}.
\end{split}
\end{equation*}
\end{proof}

\begin{theorem}
\label{Uno_ii} 
Torus orbits $\mathcal O_{\sigma}$ and $\mathcal O_{\sigma'}$ on $X$ lie in the same $\Aut(X)$-orbit if and only if there exists an automorphism $\phi:\Cl(X)\to\Cl(X)$ with the following properties
	 	\begin{itemize}
			\item $\phi(\Gamma(\mathcal O_{\sigma}))=\Gamma(\mathcal O_{\sigma'})$,
			\item $\phi(\Upsilon(\D))=\Upsilon(\D)$,
	 	  \item there exists a permutation $f$ of elements in $\{1,\ldots,r\}$ such that $\phi([D_i])=[D_{f(i)}]$.		
		\end{itemize}
\end{theorem}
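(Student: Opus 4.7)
\emph{Plan.} By Theorem \ref{all_aut}, $\Aut(X)$ is generated by $\Aut^0(X)$ together with the elements $P_\psi$, $\psi\in\Aut(N,\D)$, and since $\Aut^0(X)$ is normal, every $g\in\Aut(X)$ factors as $g=g_0 P_\psi$. In view of Theorem \ref{Uno}, which already describes $\Aut^0(X)$-orbits via the monoid $\Gamma$, the plan is to pass between a lattice automorphism $\psi\in\Aut(N,\D)$ and an automorphism $\phi$ of $\Cl(X)$ through the exact sequence (\ref{eq_exact}), matching $P_\psi$ to the conditions on $\phi$ listed in the theorem.

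\emph{Forward direction.} Assume $g=g_0P_\psi$ sends $\mathcal O_\sigma$ to $\mathcal O_{\sigma'}$. Then $g_0\cdot \mathcal O_{\psi(\sigma)}=\mathcal O_{\sigma'}$, so by Theorem \ref{Uno} we have $\Gamma(\psi(\sigma))=\Gamma(\sigma')$. Let $f$ be the permutation of $\{1,\ldots,r\}$ with $\psi(v_i)=v_{f(i)}$. The automorphism $D_i\mapsto D_{f(i)}$ of $\mathrm{Div_{\mathbb T}}(X)$ is compatible with the embedding of $M$: a direct computation
$$\sum\pairing{m}{v_i}D_{f(i)}=\sum_j\pairing{m}{v_{f^{-1}(j)}}D_j=\sum_j\pairing{(\psi^{-1})^*(m)}{v_j}D_j$$
shows that it sends the image of $m\in M$ to the image of $(\psi^{-1})^*(m)\in M$. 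It therefore descends to an automorphism $\phi$ of $\Cl(X)$ with $\phi([D_i])=[D_{f(i)}]$. One checks directly that $\phi(\Gamma(\tau))=\Gamma(\psi(\tau))$ for every $\tau\in\widetilde\D$; this yields both $\phi(\Gamma(\mathcal O_\sigma))=\Gamma(\mathcal O_{\sigma'})$ and $\phi(\Upsilon(\D))=\Upsilon(\D)$, the latter because $\psi$ permutes the cones of $\D$.

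\emph{Reverse direction.} Given $\phi$ satisfying the three conditions, the goal is to build $\psi\in\Aut(N,\D)$ with $\psi(\sigma)$ in the $\Aut^0(X)$-orbit of $\sigma'$. The permutation $f$ on $\mathrm{Div_{\mathbb T}}(X)$ and $\phi$ on $\Cl(X)$ form a commutative square over the surjection of (\ref{eq_exact}); consequently $f$ restricts to an automorphism of the kernel $M$, whose dual provides $\psi\in\Aut(N)$ with $\psi(v_i)=v_{f(i)}$. For any $\tau\in\D$, the same bookkeeping as above gives $\Gamma(\psi(\tau))=\phi(\Gamma(\tau))\in\Upsilon(\D)$ by hypothesis, so Lemma \ref{lemma_oto} forces $\psi(\tau)\in\D$. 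Hence $\psi\in\Aut(N,\D)$ and $P_\psi$ is defined. Applied to $\sigma$ this also gives $\Gamma(\psi(\sigma))=\Gamma(\sigma')$, so Theorem \ref{Uno} places $\mathcal O_{\psi(\sigma)}=P_\psi\cdot \mathcal O_\sigma$ in the same $\Aut^0(X)$-orbit as $\mathcal O_{\sigma'}$, concluding the proof.

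\emph{Main obstacle.} The critical step is the reverse direction, where one must recover $\psi$ from $\phi$ and show that it preserves the entire fan, not merely the single cone $\sigma$. This is exactly why the hypothesis $\phi(\Upsilon(\D))=\Upsilon(\D)$ appears alongside the equality $\phi(\Gamma(\mathcal O_\sigma))=\Gamma(\mathcal O_{\sigma'})$: the weaker pointwise statement would not suffice to ensure $\psi\in\Aut(N,\D)$. Lemma \ref{lemma_oto} is the bridge that promotes the global class-group condition into the global fan-preservation condition, and it is the nontrivial combinatorial input that makes the otherwise purely algebraic reconstruction of $\psi$ compatible with the structure of $\D$.
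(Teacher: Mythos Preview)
Your proof is correct and follows essentially the same approach as the paper: in both directions you pass between $\psi\in\Aut(N,\D)$ and $\phi\in\Aut(\Cl(X))$ through the exact sequence (\ref{eq_exact}), invoking Theorem~\ref{Uno} to handle the $\Aut^0(X)$-part and Lemma~\ref{lemma_oto} to upgrade the class-group condition $\phi(\Upsilon(\D))=\Upsilon(\D)$ to the fan-preservation $\psi\in\Aut(N,\D)$. Your write-up is in fact more explicit than the paper's (which presents the reverse direction via a commutative diagram and leaves the verification that $\psi$ permutes the $v_i$ to the reader); the only cosmetic slip is that the dual of $\tilde f|_M$ sends $v_j\mapsto v_{f^{-1}(j)}$ rather than $v_{f(j)}$, but this is harmless since either $\psi$ or $\psi^{-1}$ serves equally well.
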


\begin{proof}
Let us assume $\mathcal O_{\sigma}$ and $\mathcal O_{\sigma'}$ lie in the same $\Aut(X)$-orbit. By Theorem \ref{all_aut}, there exists $P_\psi\in\Aut(X)$ defined by some $\psi\in\Aut(N,\D)$ that maps $\Aut^0(X)\cdot\mathcal O_{\sigma}$ to $\Aut^0(X)\cdot\mathcal O_{\sigma'}$. The automorphism $P_\psi$ defines an automorphism $\phi:\Cl(X)\to\Cl(X)$ with the following properties:
\begin{itemize}
	 	  \item $\phi([D_\tau])=[P_\psi(D_{\tau})]=[D_{\psi(\tau)}]$;
			\item $\phi(\Gamma(\sigma_1))=\Gamma(\sigma_2)$;
			\item $\phi(\Upsilon(\D))=\Upsilon(\D)$.
\end{itemize}

Conversely, for every $\phi$ let us find $P_\psi\in\Aut(X)$ mapping \linebreak $\Aut^0(X)$-orbit corresponding to $\Gamma(\sigma)$ to $\Aut^0(X)$-orbit corresponding to $\Gamma(\sigma')=\phi(\Gamma(\sigma))$. There is a commutative diagram: 
$$
\xymatrix{
0 \ar[r]& M \ar[r]\ar@{.>}[d]^{\psi^*}& \mathrm{Div}_{\mathbb T}(X) \ar[r]\ar[d]^{\tilde{f}}& \Cl(X) \ar[r]\ar[d]^{\phi}& 0\\
0 \ar[r]& M \ar[r]                    & \mathrm{Div}_{\mathbb T}(X) \ar[r]                  & \Cl(X) \ar[r]& 0
}
$$
where $\tilde{f}$ is defined by $f$. There exists an automorphism $\psi^*:M\to M$ making the diagram commutative. Let us define $\psi:N\to N$ as dual to $\psi^*$. One can check that $\psi$ preserves $\D(1)$. By Lemma~\ref{lemma_oto}, the cone $\psi(\varsigma)$, $\varsigma\in\D$, belongs to $\D$, since $\Gamma(\psi(\varsigma))=\phi(\Gamma(\varsigma))\in\Upsilon(\D)$. The automorphism $\psi$ defines the desired $P_\psi$.
\end{proof}

\begin{example}
\label{example_5}
It is easy to see that $\Aut^0$- and $\Aut$-orbits on $\P^2$, $\EuScript H_s$, and $\P(1,1,s)$ coincide.

Let us describe the $\Aut(\EuScript B_s)$-orbits on $\EuScript B_s$. For $s>1$ the group $\mathbb Z_2\oplus\mathbb Z_2$ acts on $\Upsilon(\D)$ and defines orbits on $\EuScript B_s$ in the following way: the open $\Aut(\EuScript B_s)$-orbit coincides with the open $\mathbb T$-orbit, one dimensional torus orbits belong to the same $\Aut(\EuScript B_s)$-orbit, and four $\mathbb T$-stable points form two $\Aut(\EuScript B_s)$-orbits, see Figure~\ref{fig_4}(iv). For $s=1$ all $\mathbb T$-fixed points on $\EuScript B_1$ are in the same $\Aut(\EuScript B_1)$-orbit. 
\end{example}

\begin{theorem}
\label{Dos}
Let $X$ be a complete toric variety. The group $\Aut(X)$ acts on $X$ transitively if and only if 
	$$
	X\cong\prod\limits_{i=1}^{s}\P^{k_i}
	$$ 
	for some $k_i\in\mathbb N$.
\end{theorem}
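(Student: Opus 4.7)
The forward implication is immediate because $\Aut(\prod_i\P^{k_i})$ contains $\prod_i\mathrm{PGL}_{k_i+1}$, which acts transitively. For the converse, I first pass to the connected component: since $X$ is irreducible and $\Aut^0(X)$ is normal of finite index in $\Aut(X)$, a transitive $\Aut(X)$-action forces a transitive $\Aut^0(X)$-action (several $\Aut^0(X)$-orbits of top dimension would each be open in $X$, violating irreducibility). Transitivity of $\Aut(X)$ also forces $X$ to be smooth (the smooth locus is a nonempty $\Aut(X)$-stable open subset), hence every cone of $\D$ is smooth. Theorem~\ref{Uno} then translates $\Aut^0(X)$-transitivity into the condition that $\Upsilon(\D)$ is a singleton, equivalently $\Gamma(\sigma)=\Gamma(\{0\})=\sum_{i=1}^{r}\mathbb Z_{\geqslant 0}[D_i]$ for every cone $\sigma\in\D$.

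The main work is to convert this ``constant $\Gamma$'' condition into a rigid description of the rays of $\D$. Fix a maximal cone $\sigma$ whose rays $v_1,\ldots,v_d$ form a $\mathbb Z$-basis of $N$ (by smoothness), and let $v_1^*,\ldots,v_d^*$ be the dual basis. For each $k$, the equality $\Gamma(\sigma)=\Gamma(\{0\})$ combined with the exact sequence~(\ref{eq_exact}) furnishes a root $m_k=-v_k^*\in\R$; the nonnegativity $\pairing{m_k}{v_i}\geqslant 0$ then forces every ray $v_i\notin\sigma(1)$, written as $v_i=\sum_j a_{ij}v_j$ in the $\sigma$-basis, to satisfy $a_{ik}\leqslant 0$ for every $k$. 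Completeness of $\D$ provides an adjacent maximal cone $\tau_k$ across the facet of $\sigma$ opposite $v_k$, with $\tau_k(1)=(\sigma(1)\setminus\{v_k\})\cup\{w_k\}$; smoothness of $\tau_k$ together with the sign constraint forces the $v_k$-coordinate of $w_k$ to be $-1$. Applying the same analysis at $\tau_k$ to the ray $w_k$ produces a root $m_k'=v_k^*$, and its nonnegativity on rays outside $\tau_k$ forces the $v_k$-coordinate of every $v_l\in\D(1)$ with $v_l\notin\sigma(1)\cup\{w_k\}$ to vanish. Hence every ray of $\D$ outside $\sigma$ equals some $w_k$; letting $B_1,\ldots,B_s$ be the equivalence classes of the relation $k\sim k'\Leftrightarrow w_k=w_{k'}$, the distinct rays outside $\sigma$ are exactly $w^{(t)}=-\sum_{k\in B_t}v_k$.

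These are precisely the rays of the fan $\D'$ of $\prod_{t=1}^{s}\P^{|B_t|}$. Running the same local analysis at every other maximal cone of $\D$ shows that every maximal cone of $\D$ is also a maximal cone of $\D'$, with matching facet adjacencies. The adjacency graph of maximal cones of $\D$ is then a $d$-regular (by completeness of $\D$) subgraph of the $d$-regular connected adjacency graph of $\D'$; since a $d$-regular subgraph of a $d$-regular graph is necessarily a union of its connected components, the two graphs coincide, forcing $\D=\D'$ and so $X\cong\prod_t\P^{|B_t|}$. The main obstacle I anticipate is the coordinate analysis in the second paragraph: it requires exploiting the constant-$\Gamma$ condition simultaneously at two adjacent maximal cones and switching between their dual bases to pin down that rays outside $\sigma$ lie in the restricted set $\{w_1,\ldots,w_d\}$; the final globalisation via the adjacency graph is then a clean graph-theoretic step.
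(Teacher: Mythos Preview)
Your argument is correct. The opening reduction (transitivity of $\Aut^0(X)$, smoothness, $\Upsilon(\D)$ a singleton) matches the paper exactly, and your production of the roots $m_k=-v_k^*$ and $m_k'=v_k^*$ is essentially the paper's construction of the semisimple roots $m_1,\dots,m_d$, only organised slightly differently (you compare $\Gamma(\sigma)$ and $\Gamma(\tau_k)$ with $\Gamma(\{0\})$, while the paper compares $\Gamma(\sigma)$ with $\Gamma(\sigma')$ directly).

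The genuine divergence comes after this point. The paper stops once it has $d$ linearly independent semisimple roots and invokes \cite[Theorem~3.18]{Nill} as a black box to conclude $X\cong\prod_i\P^{k_i}$. You instead push the combinatorics through by hand: you pin down every ray outside $\sigma$ as some $w^{(t)}=-\sum_{k\in B_t}v_k$, identify $\D(1)$ with $\D'(1)$ for $\D'=\D_{\prod_t\P^{|B_t|}}$, and then argue that $\D=\D'$ via the adjacency graph. This makes your proof self-contained, at the cost of some extra work that Nill's theorem absorbs. One remark: your globalisation step can be shortened. Once $\D(1)=\D'(1)$ is known, any $d$ of these rays forming a $\mathbb Z$-basis must omit exactly one ray from each block $\{v_k:k\in B_t\}\cup\{w^{(t)}\}$ (these blocks are the circuits of the vector configuration), so every smooth maximal cone of $\D$ is already a maximal cone of $\D'$; completeness of $\D$ then forces $\D=\D'$ directly, without re-running the local analysis or the $d$-regular subgraph argument.
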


\begin{proof}
Let us assume that the action of $\Aut(X)$ is transitive. Then $X$ is smooth, in particular, all cones of $\D$ are simplicial. It is easy to see that open $\Aut^0(X)$- and open $\Aut(X)$-orbits coincide. Hence, $\Upsilon(\D)$ consists of one monoid.

Let $\sigma\in\D$ be a cone of maximal dimension $d$. We may assume that $\sigma(1)=\{\tau_1,\ldots,\tau_d\}$. There is a cone  $\sigma'\in\D$ of dimension $d$ which meets $\sigma$ along the facet without~$\tau_1$: $\sigma'(1)=\{\tau_2,\ldots,\tau_d,\tau_{d+1}\}$.

Since $\Upsilon(\D)$ contains only one monoid, the following monoids coincide:
$$
\Gamma(\sigma')=\mathbb Z_{\geqslant0}[D_1]+\sum\limits_{i>d+1}\mathbb Z_{\geqslant0}[D_i],
$$
$$
\Gamma(\sigma)=\mathbb Z_{\geqslant0}[D_{d+1}]+\sum\limits_{i>d+1}\mathbb Z_{\geqslant0}[D_i].
$$
We have
\begin{equation}
\label{DD}
\begin{split}
[D_1]=a_{d+1}[D_{d+1}]+\sum\limits_{i>d+1}a_i[D_i],\\
[D_{d+1}]=b_{1}[D_{1}]+\sum\limits_{i>d+1}b_i[D_i],
\end{split}
\end{equation}
where $a_{d+1},b_1\in\mathbb Z_{\geqslant0}$ and $a_i,b_i\in\mathbb Z_{\geqslant0}$ for $d+2\leq i\leq r$. Let us put $a_1=-1$,  $b_{d+1}=-1$ and $a_i=0$, $b_i=0$ for $2\leq i\leq d$.

Due to equations (\ref{DD}) there exist $m_1, m'_1\in M$ such that 
$$
\pairing{m_1}{v_i}=a_i\mbox{ for }1\leq i\leq r,
$$
$$
\pairing{m'_1}{v_i}=b_i\mbox{ for }1\leq i\leq r.
$$
Since $\pairing{m_1}{v_i}=\pairing{m'_1}{v_i}=0$, $2\leq i\leq d$, the characters $m_1$ and $m'_1$ are proportional. Hence, $a_i=-b_i$ for all $i$? in particular, $a_i=b_i=0$ for $i>d+1$. We have $[D_1]=[D_{d+1}]$ and $\pairing{m_1}{v_1}=-1$, $\pairing{m_1}{v_{d+1}}=1$, $\pairing{m_1}{v_i}=0$, $i\neq 1,d+1$. It means that $m_1\in\R$ is a semisimple root.

Since we can take any element of $\sigma(1)$ instead of $\tau_1$ there exists a set of semisimple roots $m_1,\ldots,m_d$ such that $\pairing{m_i}{v_j}=-\delta_{ij}$ for all $1\leqslant i,j\leqslant d$. Hence, $m_1,\ldots,m_d$ are linearly independent.

In \cite[Theorem 3.18]{Nill} it is shown that if $X$ is a complete toric variety of dimension $d$ and there are $d$ linearly independent semisimple roots $m_1,\ldots,m_d$, then 
	$$
	X\cong\prod\limits_{i=1}^{s}\P^{k_i}
	$$ 
for some $k_i\in\mathbb N$. Applying this theorem completes the proof in one direction.

Conversely, the group 
	$$
\prod\limits_{i=1}^{s}\mathrm{PGL}(k_i+1),
	$$ 
acts on $\prod\limits_{i=1}^{s}\P^{k_i}$ transitively.
\end{proof}

\end{document}